\newtheorem{theorem}{Theorem}[section] 
\newtheorem{lemma}[theorem]{Lemma}     
\newtheorem{corollary}[theorem]{Corollary}
\newtheorem{example}[theorem]{Example}
\newtheorem{problem}[theorem]{Problem}
\newtheorem{remark}[theorem]{Remark}
\definecolor{verdec}{rgb}{0.76,1.00,0.76}
\definecolor{lila}{rgb}{0.50,0.00,0.50}
\definecolor{rojoc}{rgb}{1.00,0.56,0.56}
\definecolor{lila1}{rgb}{1,0,1}{}
\definecolor{azulc}{rgb}{0.81,1.00,1.00}
\definecolor{lilac}{rgb}{0.78,0.56,0.79}
\def\NN{\mathbb N}
\title[Some examples of $m$-isometries. ]
 {Some examples of $m$-isometries}
\author[Teresa Berm\'udez]{Teresa Berm\'udez}
\email{tbermude@ull.es }
\author{Antonio Martin\'on}
\email{anmarce@ull.es}
\author{Hajer Zaway}
\email{hajer$_{-}$zaway@live.fr}
\address{
  Departamento de An\'{a}lisis Matem\'{a}tico\\
  Universidad de La Laguna\\
  38271 La Laguna (Tenerife), Spain}
\address{Department of Mathematics\\ Faculty of Sciences, University of Gab\`{e}s\\
6072 Gab\`{e}s, Tunisia}
\date{\today }
\keywords{$m$-isometry, strict $m$-isometry,    weighted shift operator, isometric  $n$-Jordan operator, sub-isometric $n$-Jordan operator,  finite dimensional space,  $k$-volume.}
\subjclass[2010]{47A05}
\begin{document}

\maketitle

\begin{abstract}
We obtain  the admissible sets on the unit circle to be the spectrum of  a strict $m$-isometry on an $n$-finite dimensional Hilbert space. This property gives a better picture of the correct spectrum of an $m$-isometry.
We  determine that the only $m$-isometries on $\mathbb{R}^2$ are $3$-isometries  and isometries giving by $\pm I+Q$, where $Q$ is a nilpotent operator. Moreover, on  real Hilbert space, we obtain that $m$-isometries preserve  volumes.
Also we present a way to construct a strict $(m+1)$-isometry with an $m$-isometry given, using ideas of Aleman and Suciu \cite[Proposition 5.2]{AS} on infinite dimensional Hilbert space.
\end{abstract}

\section{Introduction}

Let $H$ be a Hilbert space. Denote by $L(H)$ the algebra of  bounded linear operators on $H$. For $T \in L(H)$ we consider the adjoint operator $T^* \in L(H)$,  which is the unique map that satisfies
$$
\langle Tx , Ty \rangle = \langle T^*Tx , y \rangle  \; ,
$$
for every $x,y \in H$.
 Given $T\in L(H)$, denote by $Ker (T)$ and $R(T)$, the kernel and range of $T$, respectively.
For a  positive integer $m$, an {\it $m$-isometry} is an operator $T \in L(H)$ which satisfies the condition
\begin{equation}\label{m-isom H}
(yx-1)^m(T):=\sum_{k=0}^m (-1)^{m-k} {m\choose k}  T^{*k} T^k = 0 \; ;
\end{equation}
equivalently
\begin{equation}\label{m-isom H x}
\sum_{k=0}^m (-1)^{m-k} {m\choose k}  \| T^{k} x \|^2 = 0 \; ,
\end{equation}
for every $x\in H$. A {\it strict $m$-isometry} is an $m$-isometry which is not an $(m-1)$-isometry.
This class of operators was introduced by Agler in \cite{ad} and was studied by Agler and Stankus in
 \cite{as1,as2,as3}.

Let $n$ be a positive integer. Recall that $Q \in L(H)$ is {\it $n$-nilpotent} if $Q^n=0$ and $Q^{n-1} \neq 0$.

A notion related with  $m$-isometries is the following. An operator $T \in L(H)$ is {\it isometric $n$-Jordan} if there exist an isometry $A \in L(H)$ and an $n$-nilpotent $Q \in L(H)$ such that $T = A + Q$ with  $AQ=QA$.

\begin{theorem}\cite[Theorem 2.2]{bmn}\label{general}
Any isometric $n$-Jordan operator is a strict $(2n-1)$-isometry.
\end{theorem}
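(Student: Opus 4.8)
The plan is to reduce the whole statement to an elementary fact about finite differences of a scalar polynomial in $k$. Write $T=A+Q$ with $A^*A=I$, $Q^n=0$, $Q^{n-1}\neq0$ and $AQ=QA$. First I would record the commutation relations I will need. Taking adjoints in $AQ=QA$ gives $A^*Q^*=Q^*A^*$, while left-multiplying $AQ=QA$ by $A^*$ and using $A^*A=I$ gives $A^*QA=Q$; an easy induction (moving one $A$ to the left through each $Q$) then yields $A^{*}Q^{i}A=Q^{i}$ for all $i$. These identities are exactly what lets powers of $A$ collapse.

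Next I would expand $T^{*k}T^k$. Since $A$ and $Q$ commute, the binomial theorem gives $T^k=\sum_{j=0}^{n-1}\binom{k}{j}A^{k-j}Q^{j}$ (terms with $j\ge n$ vanish because $Q^n=0$, and $\binom{k}{j}$ already vanishes for $j>k$, so this is a polynomial identity in $k$). Hence
\[
T^{*k}T^{k}=\sum_{i,j=0}^{n-1}\binom{k}{i}\binom{k}{j}\,A^{*(k-i)}Q^{*i}A^{k-j}Q^{j}.
\]
The crucial step is to check that each summand operator $C_{ij}:=A^{*(k-i)}Q^{*i}A^{k-j}Q^{j}$ is independent of $k$. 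Using $A^{*(k-i)}Q^{*i}=Q^{*i}A^{*(k-i)}$ to move $Q^{*i}$ to the left, and then collapsing $A^{*(k-i)}A^{k-j}$ via $A^*A=I$ to $A^{i-j}$ when $i\ge j$ and to $A^{*(j-i)}$ when $i<j$, one finds $C_{ij}=Q^{*i}Q^{j}A^{i-j}$ for $i\ge j$ (also using $AQ=QA$) and $C_{ij}=Q^{*i}A^{*(j-i)}Q^{j}$ for $i<j$; in either case $C_{ij}$ does not depend on $k$. Consequently, for each fixed $x$, the map $k\mapsto\|T^{k}x\|^{2}=\sum_{i,j}\binom{k}{i}\binom{k}{j}\langle C_{ij}x,x\rangle$ is a polynomial in $k$ whose degree is at most $\max_{0\le i,j\le n-1}(i+j)=2n-2$.

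Now I would conclude both assertions from the theory of finite differences. The quantity $\sum_{k=0}^{m}(-1)^{m-k}\binom{m}{k}\|T^{k}x\|^{2}$ appearing in (1.2) is precisely the $m$-th forward difference at $0$ of the polynomial $p(k)=\|T^{k}x\|^{2}$. A polynomial of degree at most $2n-2$ has vanishing $(2n-1)$-th difference, so this sum is $0$ for every $x$; since $\beta_{2n-1}(T):=\sum_{k=0}^{2n-1}(-1)^{2n-1-k}\binom{2n-1}{k}T^{*k}T^{k}$ is self-adjoint, vanishing of its quadratic form forces $\beta_{2n-1}(T)=0$, i.e. $T$ is a $(2n-1)$-isometry. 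For strictness, note that the coefficient of $k^{2n-2}$ in $p(k)$ receives a contribution only from $i=j=n-1$, where $C_{n-1,n-1}=Q^{*(n-1)}Q^{n-1}$ and $\binom{k}{n-1}^{2}$ has leading coefficient $1/((n-1)!)^{2}$; thus that coefficient equals $\|Q^{n-1}x\|^{2}/((n-1)!)^{2}$. Because $Q^{n-1}\neq0$, there is an $x$ with $Q^{n-1}x\neq0$, so $p$ has degree exactly $2n-2$; its $(2n-2)$-th difference is then $(2n-2)!\,\|Q^{n-1}x\|^{2}/((n-1)!)^{2}\neq0$, whence $\beta_{2n-2}(T)\neq0$ and $T$ is not a $(2n-2)$-isometry. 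Together these give that $T$ is a strict $(2n-1)$-isometry.

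The main obstacle I anticipate is the bookkeeping in the second paragraph: verifying rigorously that every mixed term $A^{*(k-i)}Q^{*i}A^{k-j}Q^{j}$ is $k$-independent requires careful use of $A^*Q^*=Q^*A^*$, $A^*A=I$ and $A^{*}Q^{i}A=Q^{i}$, a clean separation of the cases $i\ge j$ and $i<j$, and attention to the binomial conventions for small $k$. Once the operators $C_{ij}$ are pinned down, the degree count and the finite-difference argument are routine.
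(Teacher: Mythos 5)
Your argument is correct: the identities $A^*Q^*=Q^*A^*$, $A^*Q^iA=Q^i$ and the collapse $A^{*(k-i)}A^{k-j}$ to $A^{i-j}$ or $A^{*(j-i)}$ are all valid, so $k\mapsto\|T^kx\|^2$ is indeed a polynomial of degree at most $2n-2$ whose leading coefficient is $\|Q^{n-1}x\|^2/((n-1)!)^2$, and the finite-difference conclusion (together with self-adjointness of the defining operator) gives both the $(2n-1)$-isometry property and strictness. Note that the paper itself offers no proof of this statement, citing it from \cite[Theorem 2.2]{bmn}; your polynomial-in-$k$ finite-difference argument is essentially the standard one used there, so there is nothing to flag.
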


Actually, a much stronger result is true. Indeed
in  \cite [Theorem 3]{bmno 3}, it is obtained a generalization of Theorem \ref{general} for $m$-isometries:  if $T$ is an $m$-isometry, $Q$ is an $n$-nilpotent operator and they commute, then $T+Q$ is a $(2n+m-2)$-isometry. See also \cite{gs, l}. Moreover, the study of isometric $n$-Jordan operators concerning  with $m$-isometries  on Banach spaces context has been studied in \cite{bmno 3}.

Another way of generalization was obtained in \cite[Proposition 2.6]{bmn} for  sub-isometry $n$-Jordan operator. Recall that $T$ is a \emph{sub-isometry $n$-Jordan} operator  if $T$ is  the restriction of an isometry $n$-Jordan operator $J$ to an invariant subspace of $J$.

Notice that  Theorem \ref{general} gives  an  easy way to construct examples of $m$-isometries,  for an  odd $m$. It is sufficient to choose  the identity operator as the isometry and any $n$-nilpotent operator with $n=\frac{m+1}{2}$.

At a first  glance, we could think that  all the $m$-isometries  come from isometric $n$-Jordan. However, this is not true, since there are strict $m$-isometries for even $m$, see \cite[Proposition 9]{at}. What can we say about $m$-isometries with odd $m$? Recently, Yarmahmoodi and  Hedayatian have proven that the only isometric $n$-Jordan weighted shift operators are isometries \cite[Theorem 1]{yh}. So, there are $m$-isometries that are not isometric $n$-Jordan, since Athavale  in \cite{at} gave examples of strict $m$-isometries with the weighted shift operator for all integers $m$.

Whenever, if  $H$ is finite dimensional is possible to say   more.

Some authors have given examples of $m$-isometries. For example with the unilateral or bilateral  weighted shift \cite{al, bmne, bmno, m} and with the composition operator \cite{bmno, b, gc}. Another way to construct examples of $m$-isometries is developing different tools like tensor product \cite{d}, functional calculus \cite{gu}, on Hilbert-Schmidt class \cite{bjz} and with $C_0$-semigroups \cite{bah, eva, r}.

\ \par

The purpose of this paper is to make a clear picture of $m$-isometries on finite dimensional Hilbert space. In Section 2,  we begin with the study of $m$-isometries on $\mathbb{R}^2$ and on $\mathbb{R}^n$, with $n\geq 3$. We give all the 3-isometries on $\mathbb{R}^2$. Also, we obtain  the expression of $m$-isometries and  study how this class of operators change  volumes on $\mathbb{R}^n$. Moreover, we study the case of complex Hilbert space, where we prove the admissible sets on the unit circle to be the spectrum of an $m$-isometry.
In Section 3, we reproduce  similar  ideas of Aleman and Suciu \cite[Proposition 5.2]{AS} to define a 3-isometry using a given 2-isometry. In fact, we obtain a way to construct a strict $(m+1)$-isometry using a weaker condition than a strict $m$-isometry.

In particular, we will  answer the following  problems.

\begin{problem}\label{trivial} Let $T\in L(H)$  with $H$ an $n$-finite dimensional Hilbert space and $m$ an odd integer. Are all  strict  $m$-isometries  of the form  $ \lambda I + Q$, where $Q$ is a nilpotent operator and $\lambda $ is a complex number with modulus 1?
\end{problem}

\begin{problem}\label{area}
Let $T\in L(\mathbb{R}^n)$. How does an $m$-isometry $T$  change volumes?
\end{problem}

\begin{problem}\label{le}
Let $H$ be any $n$-finite dimensional Hilbert space and let $T$ be an $m$-isometry with odd $m$.
 What can we say about the spectrum?
\end{problem}


\section{$m$-isometries on  finite dimensional Hilbert space}

Recall some important properties of the spectrum of an  $m$-isometry.

Denote $\overline {\mathbb{D}}$ and $\partial \mathbb{D}$ the closed unit disk and the unit circle, respectively.
\begin{lemma}\label{spectrum of m-isometry}
Let $m$ be a positive integer, $H$ be a Hilbert space and $T \in L(H)$ be  an $m$-isometry. Then
\begin{enumerate}
\item[$(1)$] { \rm \cite[Lemma $1.21$]{as1}} $\sigma(T)=\overline {\mathbb{D}}$ or $\sigma (T) \subseteq \partial \mathbb{D}$.
\item[$(2)$] {\rm \cite[Lemma $19$]{ahs}} The eigenvectors of $T$ corresponding to distinct eigenvalues are orthogonal.
\end{enumerate}
\end{lemma}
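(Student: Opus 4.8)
The plan is to prove the two assertions independently, both resting on the algebraic fact that the defining polynomial $\sum_{k=0}^m (-1)^{m-k}\binom{m}{k}t^k=(t-1)^m$ collapses the $m$-isometry identity to a single factor once we substitute $t=|\lambda|^2$ or $t=\lambda\overline{\mu}$. First I would record the elementary preliminary that every eigenvalue, and more generally every approximate eigenvalue, of an $m$-isometry lies on $\partial\mathbb{D}$. If $Tx=\lambda x$ with $x\neq 0$, then $\|T^kx\|^2=|\lambda|^{2k}\|x\|^2$, so \eqref{m-isom H x} becomes $\|x\|^2(|\lambda|^2-1)^m=0$, forcing $|\lambda|=1$. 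The same substitution handles approximate eigenvalues: if $\|x_n\|=1$ and $(T-\lambda I)x_n\to 0$, then $(T^k-\lambda^kI)x_n=p(T)(T-\lambda I)x_n\to 0$ for the commuting factor $p(T)=T^{k-1}+\lambda T^{k-2}+\cdots+\lambda^{k-1}I$, whence $\|T^kx_n\|^2\to|\lambda|^{2k}$ and \eqref{m-isom H x} passes to the limit as $(|\lambda|^2-1)^m=0$, so again $|\lambda|=1$.

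For part $(2)$, given distinct eigenvalues $\lambda\neq\mu$ with eigenvectors $x,y$, I would apply the operator identity \eqref{m-isom H} to $x$ and pair with $y$, using $T^kx=\lambda^kx$ and $T^ky=\mu^ky$:
\[
\sum_{k=0}^m (-1)^{m-k}\binom{m}{k}\langle T^kx,T^ky\rangle
=\langle x,y\rangle\sum_{k=0}^m(-1)^{m-k}\binom{m}{k}(\lambda\overline{\mu})^k
=\langle x,y\rangle(\lambda\overline{\mu}-1)^m=0.
\]
By the preliminary, $|\lambda|=|\mu|=1$, so $\lambda\overline{\mu}=1$ would force $\lambda=\mu$; since $\lambda\neq\mu$ we have $\lambda\overline{\mu}\neq 1$, and therefore $\langle x,y\rangle=0$.

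For part $(1)$ I would first bound the spectral radius. For an $m$-isometry one has the Agler--Stankus expansion $T^{*n}T^n=\sum_{k=0}^{m-1}\binom{n}{k}\beta_k(T)$, with the fixed operators $\beta_k(T)=\sum_{j=0}^k(-1)^{k-j}\binom{k}{j}T^{*j}T^j$ (the terms $k\geq m$ vanish precisely because $T$ is an $m$-isometry). Hence $\|T^n\|^2=\|T^{*n}T^n\|\leq\sum_{k=0}^{m-1}\binom{n}{k}\|\beta_k(T)\|$ grows at most polynomially in $n$, so $r(T)=\lim_n\|T^n\|^{1/n}\leq 1$ and $\sigma(T)\subseteq\overline{\mathbb{D}}$. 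Combining this with the preliminary, the approximate point spectrum satisfies $\sigma_{ap}(T)\subseteq\partial\mathbb{D}$. Since the topological boundary of the spectrum is always contained in the approximate point spectrum, $\partial\sigma(T)\subseteq\partial\mathbb{D}$. Finally I would run a connectedness dichotomy on the open disk: $\sigma(T)\cap\mathbb{D}$ is closed in $\mathbb{D}$, and any boundary point of it inside $\mathbb{D}$ would lie in $\partial\sigma(T)\cap\mathbb{D}=\emptyset$, so $\sigma(T)\cap\mathbb{D}$ is clopen in the connected set $\mathbb{D}$. Thus it is empty, giving $\sigma(T)\subseteq\partial\mathbb{D}$, or it is all of $\mathbb{D}$, giving $\sigma(T)=\overline{\mathbb{D}}$ by closedness.

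The computations in part $(2)$ and the modulus-one statement are routine; the genuine work sits in part $(1)$. The two places I expect resistance are controlling the spectral radius, which requires knowing (or reproving) the polynomial expansion of $T^{*n}T^n$, and the topological dichotomy, where the decisive input is the inclusion $\partial\sigma(T)\subseteq\sigma_{ap}(T)\subseteq\partial\mathbb{D}$ that forces the spectrum to be all-or-nothing inside the open disk. Since both statements are quoted from \cite{as1} and \cite{ahs}, one may simply invoke them, but the sketch above indicates why they hold.
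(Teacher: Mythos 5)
Your proposal is correct. Note that the paper itself offers no proof of this lemma: both parts are simply quoted from \cite{as1} and \cite{ahs}, so there is no internal argument to compare against. Your sketch reconstructs the standard proofs from those sources accurately: the eigenvalue/eigenvector computations reducing the defining identity to $(|\lambda|^2-1)^m=0$ and $\langle x,y\rangle(\lambda\overline{\mu}-1)^m=0$ are exactly the Agler--Helton--Stankus argument for part $(2)$, and the chain $\sigma_{ap}(T)\subseteq\partial\mathbb{D}$, $\partial\sigma(T)\subseteq\sigma_{ap}(T)$, followed by the connectedness dichotomy on $\mathbb{D}$, is the Agler--Stankus argument for part $(1)$. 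One small economy you could make: the spectral-radius estimate via the polynomial expansion of $T^{*n}T^n$ is not actually needed to get $\sigma(T)\subseteq\overline{\mathbb{D}}$. Once you know $\partial\sigma(T)\subseteq\partial\mathbb{D}$, pick $z_0\in\sigma(T)$ of maximal modulus; then $z_0\in\partial\sigma(T)$, so $|z_0|=1$ and the inclusion in the closed disk is automatic. (The growth estimate is still worth knowing, since it is what underlies power-boundedness-type results for $m$-isometries, but here it is redundant.) The only caveat worth recording is that the spectral statements implicitly assume a complex Hilbert space; on a real space one would pass to the complexification, a point the paper also leaves tacit.
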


\begin{remark}{\rm  \begin{enumerate}
\item[(1)] Notice that any $m$-isometry on a finite dimensional space is bijective.
\item[(2)] It is well known that if $Q$ is $k$-nilpotent on an $n$-dimensional vector space, then $k \leq n$.
\end{enumerate}
}
\end{remark}

Denote
$$
I_m(H):=\{ T\in L(H) \;\; : \;\;  T \mbox{ is an $m$-isometry} \} \;.
$$

The following theorem gives a nice picture of  $m$-isometries on finite dimensional spaces.

\begin{theorem}\label{Jordan} (\cite[Theorem 2.7]{bmn},$\;$\cite[page 134]{ahs}) Let $H$ be an $n$-finite dimensional Hilbert space and $T \in L(H)$. Then
\begin{enumerate}
\item $T$ is a strict $m$-isometry if and only if $T$ is an isometric $k$-Jordan operator, where  $m =2k-1$ with $k\leq n$.

\item $I_1(H)=I_2(H)\subsetneq I_3(H)=I_4(H) \subsetneq \ldots \subsetneq I_{2n-1}(H)=I_j(H)$ for all $j\geq 2n-1$.
\end{enumerate}
\end{theorem}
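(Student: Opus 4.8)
The plan is to read off both statements from the behavior of $T$ on its (generalized) eigenspaces, the decisive structural input being that for an $m$-isometry on a finite-dimensional space these eigenspaces are mutually \emph{orthogonal}. I would first dispose of the easy implication in (1): if $T$ is isometric $k$-Jordan, then Theorem \ref{general} says at once that $T$ is a strict $(2k-1)$-isometry, so $m=2k-1$; and since the nilpotent summand is $k$-nilpotent on an $n$-dimensional space, the Remark forces $k\le n$.

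For the converse, suppose $T$ is a strict $m$-isometry on $H$ with $\dim H=n$ (I work over $\mathbb C$, complexifying if necessary). As $H$ is finite-dimensional, $T$ is bijective and $\sigma(T)$ is a finite set of eigenvalues; a finite set cannot equal $\overline{\mathbb D}$, so Lemma \ref{spectrum of m-isometry}$(1)$ forces $\sigma(T)\subseteq\partial\mathbb D$, i.e.\ every eigenvalue has modulus $1$. Let $\lambda_1,\dots,\lambda_r$ be the distinct eigenvalues and $H_1,\dots,H_r$ the associated generalized eigenspaces, so $H=\bigoplus_i H_i$ algebraically and $T|_{H_i}=\lambda_i I+Q_i$ with $Q_i:=T|_{H_i}-\lambda_i I$ nilpotent.

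The crucial step, and the one I expect to be the main obstacle, is to upgrade this algebraic decomposition to an orthogonal one, that is, to prove $H_i\perp H_{i'}$ whenever $i\neq i'$. The tool is the elementary fact that, for an $m$-isometry, the scalar sequence $\ell\mapsto\beta_{x,y}(\ell):=\langle T^\ell x,\,T^\ell y\rangle$ is a polynomial in $\ell$ of degree at most $m-1$: multiplying the defining identity \eqref{m-isom H} on the left by $T^{*\ell}$ and on the right by $T^\ell$ shows that the $m$-th forward difference of $\beta_{x,y}$ vanishes for every $\ell$. Now fix $x\in H_i$ and $y\in H_{i'}$ with $i\neq i'$. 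Since $Q_i,Q_{i'}$ are nilpotent, $(\lambda_i I+Q_i)^\ell$ and $(\lambda_{i'}I+Q_{i'})^\ell$ expand as finite sums, so $T^\ell x=\lambda_i^{\,\ell}\,u(\ell)$ and $T^\ell y=\lambda_{i'}^{\,\ell}\,v(\ell)$ with $u,v$ vector-valued polynomials; using $|\lambda_i|=|\lambda_{i'}|=1$ this gives $\beta_{x,y}(\ell)=\omega^{\,\ell}\,r(\ell)$, where $\omega:=\lambda_i\overline{\lambda_{i'}}$ lies on $\partial\mathbb D$ with $\omega\neq 1$ and $r(\ell):=\langle u(\ell),v(\ell)\rangle$ is a scalar polynomial. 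Because the sequences $\ell\mapsto\omega^{\,\ell}\ell^{\,s}$ with $\omega\neq 1$ are linearly independent from the purely polynomial sequences $\ell\mapsto\ell^{\,t}$, the requirement that $\omega^{\,\ell}r(\ell)$ be a polynomial forces $r\equiv 0$; evaluating at $\ell=0$ yields $\langle x,y\rangle=r(0)=0$. Hence the generalized eigenspaces are mutually orthogonal. (Lemma \ref{spectrum of m-isometry}$(2)$ is exactly the case of genuine eigenvectors; the point here is to extend it to generalized ones.)

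With orthogonality in hand the rest is formal. Setting $U:=\bigoplus_i\lambda_i I_{H_i}$ and $Q:=\bigoplus_i Q_i$ (block-diagonal for the now-orthogonal decomposition), $U$ is unitary because $|\lambda_i|=1$, $Q$ is nilpotent with $UQ=QU$, and $T=U+Q$; thus $T$ is isometric $k$-Jordan with $k=\max_i(\text{nilpotency index of }Q_i)\le n$ by the Remark. Theorem \ref{general} then gives that $T$ is a strict $(2k-1)$-isometry, and since the index of strictness of an operator is unique we conclude $m=2k-1$, finishing (1). Finally, (2) follows from (1) together with the inclusion $I_m(H)\subseteq I_{m+1}(H)$, valid on any Hilbert space (if $\beta_{x,x}$ has vanishing $m$-th difference it has vanishing $(m+1)$-st difference, so every $m$-isometry is an $(m+1)$-isometry). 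By (1), a strict $\ell$-isometry exists on $H$ exactly when $\ell=2k-1$ with $1\le k\le n$, i.e.\ exactly for the odd $\ell\in\{1,3,\dots,2n-1\}$; therefore $I_{\ell-1}(H)=I_\ell(H)$ whenever no strict $\ell$-isometry exists (the even indices and all $\ell>2n-1$), while $I_{\ell-1}(H)\subsetneq I_\ell(H)$ precisely at the admissible odd indices $\ell=3,5,\dots,2n-1$. This is exactly the asserted chain $I_1=I_2\subsetneq I_3=I_4\subsetneq\cdots\subsetneq I_{2n-1}=I_j$ for all $j\ge 2n-1$.
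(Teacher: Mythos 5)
Your proof is correct and follows the same overall architecture as the paper's: decompose $H$ into the generalized eigenspaces of $T$, observe the eigenvalues lie on $\partial\mathbb{D}$, show the decomposition is orthogonal, write $T=U+Q$ with $U$ unitary and $Q$ nilpotent commuting, and get the converse and the identification $m=2k-1$ from Theorem \ref{general}. The one place you genuinely diverge is the orthogonality step: the paper simply invokes Lemma \ref{spectrum of m-isometry}(2), which as stated concerns only genuine eigenvectors, whereas you prove from scratch that the \emph{generalized} eigenspaces are orthogonal, using that $\ell\mapsto\langle T^\ell x,T^\ell y\rangle$ has vanishing $m$-th forward difference and hence is a polynomial, incompatible with the oscillating factor $(\lambda_i\overline{\lambda_{i'}})^\ell$ unless the inner product vanishes. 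This is a worthwhile strengthening, since it closes the gap between the cited lemma and what the proof actually needs; it also lets you treat even $m$ uniformly (no strict even-order isometries exist), where the paper instead argues $I_{2\ell-1}(H)=I_{2\ell}(H)$ separately via invertibility. The minor points you leave implicit (descent of the decomposition to a real Hilbert space after complexification, and the existence of $k$-nilpotents for each $k\le n$ to make the inclusions in (2) strict) are at the same level of detail as the paper's own proof.
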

\begin{proof}
We include the proofs for completeness.

{\it (1)} Assume that $T$ is a strict  $m$-isometry on $H$. Then the spectrum of $T$, $\sigma (T)=\{ \lambda_1, \lambda_2, \ldots, \lambda_s\}$, where  $\lambda _i $ are eigenvalues of modulus 1, since  the spectrum of $T$ must be in the unit circle and $m$ is odd \cite[Lemma 1.21 \& Proposition 1.23]{as1}. By part (2) of Lemma \ref{spectrum of m-isometry}, the spectral subspaces of $T$, $H_i:=Ker (T-\lambda _i)^{n_i}$ are mutually orthogonal and
$$
T\cong T_{|H_1} \oplus  \cdots \oplus T_{|H_s}\;,
$$
where  $n_1, \ldots , n_s$ are positive integers such that  $Ker(T-\lambda _i)^{n_i}=Ker(T-\lambda _i)^N$ for all $N\geq n_i$. Moreover, for all $j\in \{1, \ldots , s\}$, we have that $\sigma (T_{|H_j})=\{ \lambda _j\}$ and $T_{|H_j}$ is of  the form $\lambda _j + Q_j$ for some nilpotent operator $Q_j$. So, $T=A+Q$ for some isometry, in fact unitary diagonal operator $A$ and  some nilpotent operator $Q$ such that  $AQ=QA$.

The converse is consequence of Theorem \ref{general}.

{\it (2)} Let us prove that $I_{2\ell -1}(H)=I_{2\ell}(H)$ for all $\ell \in \NN$. Recall that if $T$ is $(2\ell)$-isometry, then $T$  is bijective and so $T$ is $(2\ell -1)$-isometry \cite[Proposstion 1.23]{as1}. Moreover, the highest degree of nilpotent operator on $n$-dimensional Hilbert space is $n$. The result is a consequence of  Theorem \ref{general}.
\end{proof}

\subsection{$m$-isometries on real Hilbert spaces}

Next, we study the $m$-isometries on $\mathbb{R}^n$.

Based on  the above results, we obtain all   $m$-isometries on $\mathbb{R}^2$.

\begin{theorem}\label{isom R2} If $T \in L(\mathbb{R}^2)$ is a strict $m$-isometry, then $m=1$ or $m=3$ and $T= A+Q$, where $A$ is an isometry and $Q$ is a nilpotent operator of order 2 that commutes.
\end{theorem}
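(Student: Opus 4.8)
The plan is to reduce the real problem to the complex structure theorem (Theorem \ref{Jordan}) by complexification, and then to descend the resulting decomposition back to $\mathbb{R}^2$ using the special feature of $2\times 2$ real matrices that a repeated eigenvalue is automatically real. First I would observe that the defining identity \eqref{m-isom H} has real coefficients and that, for a real matrix $T$, the adjoint $T^*$ is the transpose, whose complexification is exactly the adjoint of $T_{\mathbb{C}}$ on $\mathbb{C}^2$ (equipped with the canonical Hermitian extension of the inner product), since $T$ is real. Hence $(yx-1)^m(T)=0$ holds on $\mathbb{R}^2$ if and only if the corresponding operator identity holds for $T_{\mathbb{C}}$ on $\mathbb{C}^2$, because a real matrix vanishes precisely when its complexification does; applying this for each degree shows that $T$ is a strict $m$-isometry on $\mathbb{R}^2$ if and only if $T_{\mathbb{C}}$ is a strict $m$-isometry on $\mathbb{C}^2$. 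Theorem \ref{Jordan}(1) with $n=2$ then forces $m=2k-1$ with $k\le 2$, that is $m\in\{1,3\}$, and yields $T_{\mathbb{C}}=A_0+Q_0$ with $A_0$ unitary, $Q_0$ a $k$-nilpotent operator, and $A_0Q_0=Q_0A_0$.

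Next I would make this decomposition real. If $m=1$, then $T_{\mathbb{C}}$ is an isometry, so $T^*T=I$ on $\mathbb{R}^2$ and $T$ is an isometry; one takes $A=T$ and $Q=0$. If $m=3$, then $k=2$, so $Q_0$ is $2$-nilpotent of rank one and $R(Q_0)=\ker Q_0$ is a line that $A_0$ preserves, since $A_0$ commutes with $Q_0$. Were $A_0$ to have two distinct eigenvalues, it would be diagonalizable with one-dimensional eigenspaces, forcing $Q_0$ to be diagonal and hence zero in that basis, which is impossible. Thus $A_0$ has a single eigenvalue $\lambda$ with $|\lambda|=1$, and being unitary it equals $\lambda I$; consequently $T_{\mathbb{C}}=\lambda I + Q_0$ has $\lambda$ as a repeated eigenvalue. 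Since $T$ is a real $2\times 2$ matrix, a repeated root of its characteristic polynomial must be real, so $\lambda\in\mathbb{R}$, and $|\lambda|=1$ gives $\lambda=\pm1$. Therefore $A:=\lambda I=\pm I$ is a real isometry, $Q:=T-\lambda I$ is real with $Q_{\mathbb{C}}=Q_0$ (so $Q$ is nilpotent of order $2$), and $A,Q$ commute trivially, yielding $T=A+Q$ as claimed.

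The hard part is precisely this descent step, because Theorem \ref{Jordan} is a complex spectral result whose eigenvalues and spectral subspaces need not be real, and on $\mathbb{R}^2$ one must also account for genuine rotations, which are isometries with no real eigenvalues. The observation that resolves the difficulty is that the nontrivial Jordan case ($m=3$) is governed by a \emph{repeated} eigenvalue, and a repeated eigenvalue of a real $2\times 2$ matrix is forced to be real; together with the modulus-one constraint of Lemma \ref{spectrum of m-isometry}(1) this pins $\lambda$ down to $\pm1$ and makes both $A$ and $Q$ real. The only routine points I would still verify are that complexification preserves the order of nilpotency of $Q$ and that it sends $T^*$ to the adjoint of $T_{\mathbb{C}}$.
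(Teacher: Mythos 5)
Your proof is correct and rests on the same key result the paper relies on, Theorem \ref{Jordan}; the paper in fact states Theorem \ref{isom R2} with no explicit proof, treating it as an immediate consequence of that theorem with $n=2$. Your complexification-and-descent step --- in particular the observation that the $m=3$ case forces a repeated eigenvalue, which for a real $2\times 2$ matrix must be real and hence equal to $\pm 1$ --- is a genuine refinement rather than a different route: it supplies the passage from the complex-spectral argument behind Theorem \ref{Jordan} back to real operators, which the paper leaves implicit (and it even recovers $A=\pm I$ directly, a fact the paper only establishes later in Theorem \ref{caso-2} via Lemma \ref{lema-conmuta}).
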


Recall that isometries  on $\mathbb{R}^2$ are given by
$$
R_\theta := \left(
\begin{array}{cc}
\cos \theta  &   -\sin \theta \\
\sin \theta  &   \cos \theta
\end{array}
\right)
\;
 \mbox{ and }
S_\theta := \left(
\begin{array}{cc}
\cos \theta       &      \sin \theta \\
\sin \theta       &      -\cos \theta
\end{array}
\right)
\; ,
$$
where
\begin{enumerate}
\item $R_\theta$ is a {\it rotation} (about $0$) and its determinant, $\det (R_\theta)$ is $1$ and
\item $S_\theta$ is a symmetry respect to the straight  line of equation $x_2 = \tan (\theta/2) x_1$ and $\det (S_\theta)=-1$.
\end{enumerate}

And the non-zero  nilpotent operators on $\mathbb{R}^2$ are $\lambda M, \; \lambda N$ and $\lambda Q_k$ where
\begin{equation}\label{nilpotentes-2}
M := \left(
\begin{array}{cc}
0  & 1 \\
0 &  0
\end{array}
\right)
\;, \;\;\;\;\;
N := \left(
\begin{array}{cc}
0  &  0 \\
1 &   0
\end{array}
\right)
\;
Q_k := \left(
\begin{array}{cc}
1  &  k \\
-\frac{1}{k} &  -1
\end{array}
\right)
\; ,
\end{equation}
with  $k \neq 0$ and $\lambda \in \mathbb{C}\setminus\{ 0\}$.

We are interested in studying  isometries that commute with nilpotent operators on $\mathbb{R}^2$.

\begin{lemma}\label{lema-conmuta}
The unique isometries on $L(\mathbb{R}^2)$ that commute with a non-zero  nilpotent operator are the trivial cases, that is, $\pm I$.
\end{lemma}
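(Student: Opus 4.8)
The plan is to compute directly, exploiting the explicit matrix forms already listed in the excerpt. Since the isometries on $\mathbb{R}^2$ are exactly the $R_\theta$ (rotations, $\det = 1$) and $S_\theta$ (reflections, $\det = -1$), and the nonzero nilpotent operators are $\lambda M$, $\lambda N$, $\lambda Q_k$, I would check the commutation condition $AP = PA$ against each pairing. The cleanest organizing principle is that a nonzero nilpotent operator has a one-dimensional kernel (its range is also one-dimensional), and if $A$ commutes with a nilpotent $P$ then $A$ must preserve $\ker P$ and $R(P)$; for an isometry preserving a one-dimensional subspace on $\mathbb{R}^2$, only very rigid possibilities survive.

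First I would dispose of the reflections $S_\theta$. A reflection has eigenvalues $+1$ and $-1$, so it preserves exactly two orthogonal lines. If $S_\theta$ commutes with a nonzero nilpotent $P$, then $P$ maps each eigenline of $S_\theta$ into itself; but then $P$ would be diagonalizable in that basis with eigenvalues among its diagonal entries, and a nonzero diagonalizable operator cannot be nilpotent. Hence no reflection can commute with a nonzero nilpotent. Second I would treat the rotations $R_\theta$. A rotation with $\theta \notin \{0, \pi\}$ has no real eigenvector, so it preserves no real line; since $\ker P$ is a real line that $R_\theta$ must preserve, this forces $\theta \in \{0, \pi\}$, i.e. $R_\theta = \pm I$. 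The remaining cases $R_0 = I$ and $R_\pi = -I$ are scalar, hence commute with everything, so they do give valid (trivial) solutions.

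Alternatively, and perhaps more in the spirit of the explicit tables in the paper, one can simply multiply out $AP - PA$ for each of the six products $A \in \{R_\theta, S_\theta\}$ against $P \in \{M, N, Q_k\}$ (the scalar $\lambda$ is irrelevant to commutation) and read off when the commutator vanishes. For $R_\theta$ with $M$, for instance, the off-diagonal entries of the commutator force $\sin\theta = 0$, and then the condition $\cos\theta = -\cos\theta$ on the remaining entries is not imposed, so $\cos\theta = \pm 1$ survives as $R_\theta = \pm I$; the symmetric computation with $N$ and the slightly longer one with $Q_k$ give the same conclusion, while every $S_\theta$ computation produces an inconsistency.

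I expect the only mildly delicate step to be the case $P = Q_k$, whose matrix is not upper- or lower-triangular, so the commutator entries are a touch messier; but since $Q_k$ has eigenvalues $0$ (it is genuinely nilpotent of order $2$) with a definite one-dimensional kernel $\ker Q_k = \operatorname{span}(k,-1)^{\!\top}$, the conceptual argument via preserved eigenlines applies verbatim and lets me avoid grinding through that algebra. In all cases the surviving isometries are precisely $\pm I$, which is the claim.
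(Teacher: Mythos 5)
Your proposal is correct, and your primary argument is genuinely different from the paper's. The paper proves the lemma by brute force: it multiplies out $R_\theta P$ and $PR_\theta$ (respectively $S_\theta P$ and $PS_\theta$) for $P\in\{M,N,Q_k\}$ and observes that commutation forces $\sin\theta=0$ in the rotation case (hence $R_\theta=\pm I$) and $\sin\theta=\cos\theta=0$ in the reflection case (impossible) --- exactly the computation you sketch as your ``alternative.'' Your main route instead runs through invariant subspaces: a nonzero nilpotent $P$ on $\mathbb{R}^2$ satisfies $P^2=0$ and has one-dimensional kernel, any $A$ commuting with $P$ preserves $\ker P$ and so has a real eigenvector, which rules out every rotation other than $\pm I$; and for a reflection, $P$ preserves each of the two one-dimensional eigenlines of $S_\theta$, forcing $P$ to be diagonalizable and hence zero. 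Both arguments are complete and correct. What your version buys is robustness and transparency: it never uses the explicit classification of nilpotents in (\ref{nilpotentes-2}) (only that a nonzero nilpotent on $\mathbb{R}^2$ has one-dimensional kernel), it sidesteps the messier $Q_k$ computation entirely, and the mechanism --- commuting operators preserve each other's eigenspaces and kernels --- is the same one the paper itself invokes later in the proofs of Theorems \ref{Jordan} and \ref{characteristic of m-isometry}. The paper's computation, by contrast, is entirely self-contained at the level of $2\times 2$ matrix arithmetic and doubles as a verification that the list (\ref{nilpotentes-2}) behaves as claimed.
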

\begin{proof}
Simple calculations prove that
$$
R_\theta M = M R_\theta \Longleftrightarrow R_\theta N = N R_\theta \Longleftrightarrow R_\theta Q_k = Q_k R_\theta \Longleftrightarrow \sin \theta = 0 \Longleftrightarrow \theta = 0 \mbox{ or } \theta = \pi \; .
$$
That is, the unique isometries of type $R_\theta$ which commute with some non-zero nilpotent (hence with all the nilpotent) are $R_0 = I$ and  $R_\pi = -I$.

Analogously, we have that
$$
S_\theta M = M S_\theta \Longleftrightarrow S_\theta N = N S_\theta \Longleftrightarrow S_\theta Q_k = Q_k S_\theta \Longleftrightarrow \sin \theta = \cos \theta = 0 \; ,
$$
which  it is impossible. Hence there are not isometries $S_\theta$ which commute with some non-zero nilpotent operator.
\end{proof}

Taking into account Theorem \ref{Jordan} we give  the   unique strict 3-isometries on $\mathbb{R}^2$. Indeed, we  answer  Problem \ref{trivial}  for $n=2$ in the following result.

\begin{theorem}\label{caso-2}  The strict $3$-isometries  on $\mathbb{R}^2$ are of the form $\pm I+Q$, where $Q$ is a non-zero nilpotent operator given in (\ref{nilpotentes-2}).
\end{theorem}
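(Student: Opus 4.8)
The plan is to read off the statement from the two results established immediately beforehand, so that the proof becomes an assembly rather than a fresh computation. First I would apply Theorem \ref{isom R2}: since $T$ is a strict $3$-isometry on $\mathbb{R}^2$, it decomposes as $T = A + Q$ with $A$ an isometry, $Q$ a nilpotent operator of order $2$ (in particular $Q \neq 0$), and $AQ = QA$. This already isolates the essential structure and reduces the theorem to determining which isometries $A$ are admissible.

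The decisive step is then Lemma \ref{lema-conmuta}. Because $A$ is an isometry that commutes with the non-zero nilpotent $Q$, the lemma forces $A = \pm I$; no rotation $R_\theta$ with $\theta \neq 0,\pi$ and no symmetry $S_\theta$ survives the commutation constraint. Hence $T = \pm I + Q$. To see that $Q$ is one of the operators listed in (\ref{nilpotentes-2}), I would note that a $2 \times 2$ real matrix is $2$-nilpotent exactly when its trace and determinant vanish; splitting into the case where the diagonal entry is zero (which yields $\lambda M$ or $\lambda N$) and the case where it is non-zero (which yields $\lambda Q_k$) recovers precisely the three families, so $Q$ automatically has the required form.

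For the converse I would verify that every operator $\pm I + Q$, with $Q$ a non-zero nilpotent from (\ref{nilpotentes-2}), is indeed a strict $3$-isometry: $\pm I$ is unitary, hence an isometry, $Q$ is $2$-nilpotent, and the two commute trivially, so Theorem \ref{general} with $n = 2$ yields a strict $3$-isometry. I do not expect a genuine obstacle here; the only point needing attention is \emph{strictness}, where one must use $Q \neq 0$ to guarantee that $\pm I + Q$ is a bona fide strict $3$-isometry and does not collapse to the isometry $\pm I$ (a strict $1$-isometry), which is exactly why the hypothesis restricts to non-zero nilpotents.
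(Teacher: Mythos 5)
Your proof is correct and follows exactly the route the paper takes: the paper's own proof is the one-line observation that the statement is immediate from Theorem \ref{isom R2} combined with Lemma \ref{lema-conmuta}. You merely spell out the details that the paper leaves implicit, namely the trace-and-determinant classification of the $2$-nilpotents in (\ref{nilpotentes-2}) and the converse via Theorem \ref{general}, both of which are sound.
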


\begin{proof}
 It is immediate by Theorem \ref{isom R2} and Lemma  \ref{lema-conmuta}.

\end{proof}

Let $T\in L(\mathbb{R}^n)$ with $n\geq 3$ and let us consider the following $n$ conditions:

\begin{enumerate}
\item[$(M_k)$] $\;\;\;\;\;\;\;\;\;\;\;\;\;\;\;\;S_k(Tx_1, Tx_2, \ldots , Tx_k)=S_k(x_1, x_2, \ldots , x_k)$
\end{enumerate}
\noindent for all $x_1, x_2, \ldots, x_k \in \mathbb{R}^n$ and $k=1, 2, \ldots , n$, where
$S_k(x_1, x_2, \ldots , x_k) $ denotes the $k$-dimensional measure of the set
$$
\{ \lambda _1 x_1 + \lambda _2x_2+ \ldots + \lambda _k x_k\;\;\; :\;\;\; 0\leq \lambda _i \leq 1,\;\; \mbox{ for } i=1, 2, \ldots , k\} \;.
$$

\begin{lemma}\label{condicion}
Let $T\in L(\mathbb{R}^n)$. Then
\begin{enumerate}
\item \cite[Teorema II]{G} $T$ satisfies the  conditions $(M_1), \; (M_2),\; \cdots \; , (M_{n-1})$ if and only if  $T$ is an isometry.
\item \cite{F} The condition $(M_n)$ is equivalent to $det (T)=\pm 1$.
\end{enumerate}
\end{lemma}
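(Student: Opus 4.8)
The plan is to reduce both parts to the classical Gram--determinant formula for the $k$-dimensional volume of a parallelepiped. Concretely, for vectors $x_1, \ldots , x_k \in \mathbb{R}^n$ I would first record the identity
$$
S_k(x_1, \ldots , x_k)^2 = \det G(x_1, \ldots , x_k) ,
$$
where $G(x_1, \ldots , x_k)$ is the Gram matrix with entries $\langle x_i , x_j\rangle$. When $k=n$ this specializes to $S_n(x_1, \ldots , x_n) = |\det A|$, writing $A$ for the matrix whose columns are $x_1, \ldots , x_n$, since $\det G = \det(A^{t}A) = (\det A)^2$.

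For part (2), the columns of $TA$ are exactly $Tx_1, \ldots , Tx_n$, so
$$
S_n(Tx_1, \ldots , Tx_n) = |\det(TA)| = |\det T|\,|\det A| = |\det T|\, S_n(x_1, \ldots , x_n) .
$$
Hence $(M_n)$ holds for every choice of $x_1, \ldots , x_n$ precisely when $|\det T| = 1$, that is, $\det T = \pm 1$. This is the shortest part and is essentially immediate once the $k=n$ case of the volume formula is in hand.

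For part (1) I would treat the two implications separately. If $T$ is an isometry, then $T$ is orthogonal and preserves the inner product, so $G(Tx_1, \ldots , Tx_k) = G(x_1, \ldots , x_k)$ for every $k$; by the Gram formula this yields $S_k(Tx_1, \ldots , Tx_k) = S_k(x_1, \ldots , x_k)$, and in particular $(M_1), \ldots , (M_{n-1})$ all hold (indeed all $(M_k)$ do). Conversely, it suffices to use only $(M_1)$: since $S_1(x) = \|x\|$, condition $(M_1)$ reads $\|Tx\| = \|x\|$ for all $x$, and polarization together with the linearity of $T$ gives $\langle Tx , Ty\rangle = \langle x , y\rangle$, i.e. $T^{*}T = I$, so $T$ is an isometry. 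It is worth noting that the converse direction consumes only $(M_1)$, whereas the direct direction genuinely uses that an isometry preserves $S_k$ in every dimension.

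The main obstacle is the Gram--determinant volume formula in the intermediate range $1 < k < n$, namely that the $k$-dimensional measure of the parallelepiped spanned by $x_1, \ldots , x_k$ inside the $k$-plane they span equals $\sqrt{\det G(x_1, \ldots , x_k)}$. I would establish this by applying Gram--Schmidt orthogonalization to $x_1, \ldots , x_k$ and expressing the volume as the product of the successive orthogonal heights, or equivalently by the Cauchy--Binet expansion of $\det(A^{t}A)$ for the $n\times k$ matrix $A = [\,x_1 \mid \cdots \mid x_k\,]$. Everything else is bookkeeping built on this single identity.
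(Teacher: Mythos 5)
Your proposal is correct. Note that the paper does not actually prove this lemma; it simply cites Guivernau \cite{G} for part (1) and Fleming \cite{F} for part (2), so there is no in-paper argument to compare against. Your self-contained proof via the Gram--determinant formula $S_k(x_1,\ldots,x_k)^2=\det\bigl(\langle x_i,x_j\rangle\bigr)$ is the standard route and works: part (2) follows from $S_n(Tx_1,\ldots,Tx_n)=|\det T|\,S_n(x_1,\ldots,x_n)$ together with evaluation at the standard basis to get the converse implication, and part (1) follows since an isometry preserves inner products (hence all Gram matrices and all $S_k$), while conversely $(M_1)$ alone already forces $\|Tx\|=\|x\|$ and polarization gives $T^{*}T=I$. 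Your observation that only $(M_1)$ is consumed in the converse is accurate and shows the lemma as stated is not sharp; the cited Teorema II of \cite{G} is presumably the stronger assertion that a \emph{single} condition $(M_k)$ with $1\le k\le n-1$ already characterizes isometries, but that stronger claim is not what the paper states, and your argument fully establishes the statement as written. The only ingredient you defer --- the identification of the $k$-dimensional measure of the parallelepiped with $\sqrt{\det G}$ for $1<k<n$ --- is classical and your Gram--Schmidt or Cauchy--Binet sketch for it is adequate.
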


An easy application of Theorem \ref{general} gives that, for example in $\mathbb{R}^3$, we have strict 3-isometries giving by $\pm I +Q$, where $Q$ is a 2-nilpotent operator and strict 5-isometries giving by $\pm I +Q$, where $Q$ is a 3-nilpotent operator.

The next result gives answer to  Problems \ref{trivial} and \ref{area} for $n\geq 3$, where $n$ is the dimension of the Hilbert space.

\begin{theorem}\label{caso-n}
Let  $n\geq 3$. Then the following properties follow:
\begin{enumerate}
\item There are non-trivial  strict $m$-isometries on $L(\mathbb{R}^n)$ for  any odd $m$ less than  $2n-1$, that is, there exists an isometry different from $\pm I$ such that commutes with a non-zero $k$-nilpotent operator with $k\in \{ 1, 2, \cdots, n-1\}$.

\item The $m$-isometries preserve volumes.
\end{enumerate}
\end{theorem}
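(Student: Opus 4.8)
The plan is to establish the two assertions separately, both times leaning on Theorem \ref{Jordan} and Lemma \ref{condicion}. For part (1) the idea is a single block construction that handles every admissible $k$ at once. Given $k$ with $1 \le k \le n-1$, decompose $\mathbb{R}^n = V \oplus W$ orthogonally with $\dim V = k$ and $\dim W = n-k \ge 1$, and set
\[
A := I_V \oplus (-I_W), \qquad Q := N_k \oplus 0_W ,
\]
where $N_k$ is a single $k\times k$ nilpotent Jordan block (so $N_k$ is $k$-nilpotent) acting on $V$. Then $A$ is orthogonal, hence an isometry, and since it carries both eigenvalues $+1$ (on $V$) and $-1$ (on $W$) it is different from $\pm I$. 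On $V$ the operator $A$ is the scalar $I_V$, so it commutes with $N_k$; on $W$ it commutes with $0_W$; and there are no off-diagonal blocks, whence $AQ=QA$. By Theorem \ref{general} (equivalently Theorem \ref{Jordan}(1)) the operator $T:=A+Q$ is a strict $(2k-1)$-isometry, and it is non-trivial because its spectrum $\{+1,-1\}$ contains two points, so $T$ cannot be written as $\lambda I + (\text{nilpotent})$. Letting $k$ run through $\{1,2,\dots,n-1\}$ yields non-trivial strict $m$-isometries for every odd $m=2k-1<2n-1$; for $k=1$ one simply has $Q=0$ and $T=A$, a non-trivial isometry.

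For part (2), the plan is to reduce volume preservation to the determinant through Lemma \ref{condicion}(2) and then read the determinant off the spectrum. Let $T$ be an $m$-isometry on $\mathbb{R}^n$. Passing to the complexification $T_\mathbb{C}$ on $\mathbb{C}^n$, which is again an $m$-isometry since \eqref{m-isom H} is a polynomial identity with integer coefficients, Lemma \ref{spectrum of m-isometry}(1) gives $\sigma(T_\mathbb{C})=\overline{\mathbb{D}}$ or $\sigma(T_\mathbb{C})\subseteq \partial\mathbb{D}$; as $\dim_{\mathbb{C}}\mathbb{C}^n<\infty$ the spectrum is finite, so the first alternative is impossible and every eigenvalue of $T$ lies on the unit circle. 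Consequently
\[
|\det T| = \prod_{\lambda\in\sigma(T_\mathbb{C})} |\lambda| = 1 ,
\]
the product taken with algebraic multiplicities, and since $T$ is real this forces $\det T=\pm 1$. By Lemma \ref{condicion}(2) this is precisely condition $(M_n)$, so $T$ preserves $n$-dimensional volume.

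The routine verifications ($AQ=QA$, the order of nilpotency of $N_k$, and the multiplicativity of $\det$ over eigenvalues) are immediate. The only genuinely delicate point is in part (1): any construction that makes $A$ a plane rotation forces, through the commutant computation behind Lemma \ref{lema-conmuta}, the nilpotent part to live on a complement of dimension $n-2$, and so reaches only $k\le n-2$. The device that overcomes this, and in particular attains the top case $k=n-1$, $m=2n-3$, is to take $A$ scalar equal to $\pm I$ on each summand, which enlarges the commutant enough to accommodate a $k$-nilpotent of any order up to $n-1$ while keeping $A\neq\pm I$; this is exactly what gives the negative answer to Problem \ref{trivial} for $n\ge 3$.
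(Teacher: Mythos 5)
Your proof is correct and follows essentially the same route as the paper: part (1) is the same ``$\pm1$-block isometry plus a commuting nilpotent shift'' construction (the paper places the $-1$ on a single coordinate and the $k$-nilpotent shift on the remaining ones, i.e.\ your decomposition with the roles of $V$ and $W$ interchanged), and part (2) is the same reduction to $\det T=\pm1$ via Lemma \ref{condicion}(2) and the unimodularity of the eigenvalues. The only cosmetic difference is that you obtain the eigenvalues directly from $\sigma(T_{\mathbb{C}})$ using Lemma \ref{spectrum of m-isometry}(1), whereas the paper first writes $T=A+Q$ and invokes $\sigma(A+Q)=\sigma(A)$.
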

\begin{proof}
{\it (1)} Define
\begin{eqnarray*}
A(x_1,x_2, \ldots, x_n):&=& (-x_1, x_2, \ldots , x_n)\\
Q_j(x_1,x_2, \ldots, x_n):&=& (0, x_3, x_4, \cdots, x_{j+1}, 0, \cdots, 0) \;.
\end{eqnarray*}
Then $A$ is an isometry and $Q_j$ is a $j$-nilpotent operator such that
$$
AQ_j(x_1, x_2, \ldots, x_n)=Q_jA(x_1, x_2, \ldots, x_n)=(0, x_3, x_4, \ldots , x_{j+1}, 0, \ldots, 0) \;
,
$$
for all $(x_1, x_2, \ldots , x_n)\in \mathbb{R}^n$. By Theorem \ref{general}, we get that $A+Q_j$ is a   non trivial strict  $(2j-1)$-isometry for $j=1, \ldots, n-1$.

{\it (2)} By Lemma \ref{condicion}, it will be enough to prove that $det(A+Q)=\pm 1$ for all isometries $A$ that commute with a nilpotent operator $Q$. Since $AQ=QA$, then $\sigma (A+Q)=\sigma(A)$ by \cite[Proposition 1.1]{yhy}. According to the spectrum of an isometry on a finite dimensional space, we have that the spectrum of $A$ is a closed subset of the unit circle. By \cite[page 150]{a}, the determinant of $T$ is the product of the eigenvalues of $T$, counting multiplicity. Hence $det(T)=\pm 1$.

\end{proof}

The converse of part (2) of Theorem \ref{caso-n} is not true, as prove the following example.

\begin{example}
Let $T:=\left( \begin{array}{ccc}
1&0&0\\
0&2& 1\\
0&1& 1
\end{array}\right)$. Then $det(T)=1$ and $T$ is not a 3-isometry, since
$$
\|T^3x\|^2-3\|T^2x\|^2+3\|Tx\|^2-\|x\|^2\neq 0\;,
$$
for  $x: =(1,1,0)$.
\end{example}

\subsection{On complex Hilbert space}

We recall the following results about the spectrum of $m$-isometries.

\begin{lemma}{ \rm \cite[Theorem 4.4]{bmn}}\label{spectrum}
Let $H$ be an  infinite dimensional Hilbert space.
\begin{enumerate}
\item If $K$ is any compact subset of $\partial \mathbb{D}$, then there exists  a strict $m$-isometry for any odd number $m$ such that $\sigma (T)=K$.
\item If $K$ is the closed unit disk, then there exits a strict $m$-isometry for any integer number $m$.
\end{enumerate}
\end{lemma}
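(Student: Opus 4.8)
The plan is to establish the two parts by two genuinely different devices: part (1) will come from a commuting isometry-plus-nilpotent (an isometric Jordan operator), so as to invoke Theorem \ref{general}, while part (2), which must also cover \emph{even} $m$, will come from a unilateral weighted shift following Athavale \cite{at}, since the Jordan route only yields odd $m$.

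For part (1), write the odd integer as $m=2k-1$. First I would manufacture a unitary $A$ with $\sigma(A)=K$: choose a countable dense subset $\{\lambda_j\}$ of the compact set $K\subseteq\partial\mathbb{D}$ and let $A$ be the diagonal operator $Ae_j=\lambda_j e_j$ on $\ell^2$; since $|\lambda_j|=1$, $A$ is unitary (in particular an isometry) and $\sigma(A)=\overline{\{\lambda_j\}}=K$. Next I would pass to $H=\ell^2\otimes\mathbb{C}^k$ and set $T=A\otimes I_k+I\otimes N_k$, where $I$ is the identity on $\ell^2$ and $N_k$ is the $k\times k$ nilpotent Jordan block. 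Then $A\otimes I_k$ is an isometry, $I\otimes N_k$ is $k$-nilpotent, and the two commute, so $T$ is an isometric $k$-Jordan operator; by Theorem \ref{general} it is a strict $(2k-1)=m$-isometry. Finally, since $I\otimes N_k$ is a nilpotent commuting with $A\otimes I_k$, the spectrum is unchanged: $\sigma(T)=\sigma(A\otimes I_k)=\sigma(A)=K$ by \cite[Proposition 1.1]{yhy}. This settles part (1), including the base case $m=1$ (where $k=1$ and $T=A$).

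For part (2) I would realize the full disk with a weighted shift. Fix a polynomial $p$ of degree exactly $m-1$ that is strictly positive on $\{0,1,2,\dots\}$ and define $T$ on $\ell^2(\mathbb{N})$ by $Te_n=w_ne_{n+1}$ with $w_n^2=p(n+1)/p(n)$. Because $T^{*k}T^k$ is diagonal, the operator in (\ref{m-isom H}) is diagonal in $\{e_n\}$, so it suffices to check scalar identities: for each fixed $n$, $\|T^ke_n\|^2=p(n+k)/p(n)$ is a polynomial of degree exactly $m-1$ in $k$, whence its $m$-th forward difference vanishes identically but its $(m-1)$-th does not. Thus (\ref{m-isom H x}) holds for every $e_n$ with exponent $m$ and fails with exponent $m-1$, so $T$ is a strict $m$-isometry. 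Since $w_n^2\to1$, $T$ is bounded; and $T$ is not surjective (its range omits $e_0$), hence not invertible, so $0\in\sigma(T)$. As $0\notin\partial\mathbb{D}$, part (1) of Lemma \ref{spectrum of m-isometry} forces $\sigma(T)=\overline{\mathbb{D}}$, as required.

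I expect the delicate point to be part (2): the weights must be chosen so that $T$ is simultaneously a bounded \emph{strict} $m$-isometry and has spectrum exactly $\overline{\mathbb{D}}$. Strictness for even $m$ is precisely where the isometric-Jordan mechanism of part (1) is unavailable, so the degree-$(m-1)$ growth of $k\mapsto\|T^ke_n\|^2$ must be verified directly. The spectral conclusion, by contrast, is cheap: Lemma \ref{spectrum of m-isometry} reduces the entire computation of $\sigma(T)$ to the single observation that $0\in\sigma(T)$, which excludes the circle alternative and pins the spectrum to the closed disk.
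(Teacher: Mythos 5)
The paper does not actually prove this lemma --- it is quoted verbatim from \cite[Theorem 4.4]{bmn} --- so there is no internal proof to compare against; your argument is correct and is essentially the standard one behind that citation: for part (1) a diagonal unitary with spectrum $K$ plus a commuting Jordan nilpotent, so that Theorem \ref{general} gives strictness and the commuting-nilpotent perturbation leaves the spectrum equal to $K$, and for part (2) an Athavale-type unilateral weighted shift with $w_n^2=p(n+1)/p(n)$, $\deg p=m-1$, whose strict $m$-isometry property follows from the vanishing of the $m$-th (but not the $(m-1)$-th) forward difference of $k\mapsto p(n+k)$. Both constructions check out, including the final spectral identification of $\overline{\mathbb{D}}$ via non-invertibility together with the dichotomy in part (1) of Lemma \ref{spectrum of m-isometry}.
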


The main aim of this section is to solve Problem \ref{le}.

Let $T\in L(\mathbb{C}^n)$ be an $m$-isometry.  It is clear that $\sigma (T)\subseteq \partial \mathbb{D} $ by part (1) of Lemma \ref{spectrum of m-isometry} and $\sigma (T)$ has at most $n$ different eigenvalues. Indeed if $K:=\{ \lambda _1, \cdots, \lambda _ n\}$ with $\lambda _i$ different complex numbers on the unit circle, then it is possible to define  an isometry $T$  such that $\sigma (T)=K$. In particular, the following operator
$$
T(x_1, \cdots,  x_n):=(\lambda_1x_1, \cdots, \lambda_nx_n)
$$
is an isometry on $\mathbb{C}^n$ with $\sigma (T)=\{ \lambda_1, \cdots, \lambda_n\}$.

In the following theorem we prove that any $m$-isometry with $m\geq 3$ on $\mathbb{C}^n$ can not have $n$ different eigenvalues.

\begin{theorem}
Any strict $(2k-1)$-isometry on $\mathbb{C}^n$ with $2 \leq k \leq n$ has at most $n-1$ distinct eigenvalues.
\end{theorem}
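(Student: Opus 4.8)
The plan is to leverage the Jordan structure of strict $(2k-1)$-isometries established in Theorem \ref{Jordan} together with a dimension count. By part (1) of Theorem \ref{Jordan}, a strict $(2k-1)$-isometry $T$ on $\mathbb{C}^n$ is an isometric $k$-Jordan operator, so $T = A + Q$ with $A$ unitary, $Q$ a $k$-nilpotent operator, and $AQ = QA$. Since $2 \leq k$, the nilpotent part satisfies $Q^{k-1} \neq 0$, and in particular $Q \neq 0$; this is the crucial consequence of $T$ being a strict $m$-isometry with $m \geq 3$ rather than an isometry.

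Next I would invoke the orthogonal spectral decomposition produced in the proof of Theorem \ref{Jordan}(1). Writing $\sigma(T) = \{ \lambda_1, \ldots, \lambda_s \}$ for the distinct eigenvalues (all of modulus $1$, since $m$ is odd and $\sigma(T) \subseteq \partial \mathbb{D}$ by Lemma \ref{spectrum of m-isometry}), we have $H = H_1 \oplus \cdots \oplus H_s$ with $H_j = Ker(T - \lambda_j)^{n_j}$ mutually orthogonal, and on each block $T|_{H_j} = \lambda_j I + Q_j$ for a nilpotent $Q_j$. Under this decomposition $Q = Q_1 \oplus \cdots \oplus Q_s$, so $Q \neq 0$ forces $Q_{j_0} \neq 0$ for at least one index $j_0$.

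The final step is the dimension count. A non-zero nilpotent operator cannot act on a space of dimension $1$ (a nilpotent scalar multiple of the identity is automatically zero), so $\dim H_{j_0} \geq 2$, while every other block contributes $\dim H_j \geq 1$. Summing,
$$
n = \sum_{j=1}^s \dim H_j \geq 2 + (s - 1) = s + 1,
$$
hence $s \leq n - 1$, which is exactly the assertion that $T$ has at most $n - 1$ distinct eigenvalues.

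This argument is essentially structural and I do not anticipate a genuine obstacle; the only points requiring care are to make sure the number of distinct eigenvalues equals the number of orthogonal spectral blocks $s$ (each block carries a single eigenvalue $\lambda_j$), and to note that the \emph{strictness} of the $(2k-1)$-isometry is precisely what guarantees $Q \neq 0$ rather than merely $Q$ nilpotent. Everything else follows from the already-proved Jordan decomposition and elementary linear algebra.
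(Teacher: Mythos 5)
Your proof is correct, but it takes a somewhat different route from the paper's. The paper argues by contradiction starting from the assumption of $n$ distinct eigenvalues: on $\mathbb{C}^n$ this forces $T$ to be diagonalizable, $T=PSP^{-1}$ with $S$ a diagonal unitary, and Lemma \ref{spectrum of m-isometry}(2) (orthogonality of eigenvectors for distinct eigenvalues) lets one take $P$ unitary, so $T$ itself is unitary --- contradicting strictness for $k\geq 2$. You instead go through the isometric $k$-Jordan decomposition of Theorem \ref{Jordan}(1), observe that strictness with $k\geq 2$ forces the nilpotent part to be non-zero, localize that non-vanishing to one spectral block (which therefore has dimension at least $2$), and finish with a dimension count. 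Both arguments ultimately rest on the same two inputs (the spectral/Jordan structure and the impossibility of $T$ being unitary), but your version is direct rather than by contradiction, and it has the advantage that the same count, applied to the block carrying the full nilpotency order $k$ (which needs dimension at least $k$), immediately yields the sharper bound of $n-k+1$ distinct eigenvalues that the paper only obtains later in Theorem \ref{characteristic of m-isometry} and its corollary. One small point to keep tidy: when you identify the global nilpotent part $Q$ with $Q_1\oplus\cdots\oplus Q_s$ you are implicitly using uniqueness of the commuting diagonalizable-plus-nilpotent decomposition; alternatively you can avoid this by arguing directly that if every $Q_j=0$ then $T$ is a diagonal unitary with respect to the orthogonal spectral decomposition, hence an isometry, contradicting strictness.
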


\begin{proof}
Assume that $T\in L(\mathbb{C}^n)$ is a strict $(2k-1)$-isometry with $\sigma(T) = \{\lambda_1, ..., \lambda_n\}$
where $\lambda_1, ..., \lambda_n$ are  different eigenvalues of $T$. Then  $T$ could be written as
$T = PSP^{-1}$, for some $P\in L(\mathbb{C}^n)$ where
$$
S:=\left(\begin{array}{cccc}
                                                     \lambda_1 &     0     &   \dots       & 0\\
                                                          0    & \lambda_2 &    \ddots     & \vdots\\
                                                     \vdots    &  \ddots   &    \ddots     &     0\\
                                                         0     &  \dots    &      0        &   \lambda_n
                                                        \end{array}\right)
$$
and $|\lambda _i|=1$ for $i\in \{ 1, \cdots, n\}$, by  part (1) of Lemma \ref{spectrum of m-isometry}.
 Since $T$ is a strict $(2k-1)$-isometry, by part (2) of Lemma \ref{spectrum of m-isometry}, the operator  $P$ is a unitary operator. This means that $T$ is unitarily equivalent to $S$, therefore $T$ is a unitary, which is a contradiction.
\end{proof}

\begin{theorem}\label{characteristic of m-isometry}
The strict $(2k-1)$-isometries on $\mathbb{C}^n$, with $2 \leq k \leq n$ are of the form $(\lambda_1 I_{n_1} \oplus ...\oplus \lambda_\ell I_{n_{\ell}}) + Q$, with  $\ell \in \{1,...,n-k+1\}$, where $Q$ is a  $k$-nilpotent,  $|\lambda_j|= 1$ for all $j \in \{1,...,\ell\}$ and $n_{1}+...+n_{\ell}=n$.
\end{theorem}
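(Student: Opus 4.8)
The plan is to reduce to the isometric $k$-Jordan structure and then read off the spectral decomposition of the isometric summand. First I would apply part (1) of Theorem \ref{Jordan}: since $T$ is a strict $(2k-1)$-isometry on the finite dimensional space $\mathbb{C}^n$, it is an isometric $k$-Jordan operator, so there exist an isometry $A \in L(\mathbb{C}^n)$ and a $k$-nilpotent $Q \in L(\mathbb{C}^n)$ with $T = A+Q$ and $AQ = QA$. As $\mathbb{C}^n$ is finite dimensional, the isometry $A$ is surjective and hence unitary.

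Next I would diagonalize $A$. Being unitary, $A$ is normal and admits an orthonormal eigenbasis; let $\lambda_1, \ldots, \lambda_\ell$ denote its distinct eigenvalues, with $|\lambda_j| = 1$, and set $E_j := \ker(A - \lambda_j I)$ and $n_j := \dim E_j$. Then $\mathbb{C}^n = E_1 \oplus \cdots \oplus E_\ell$ is an orthogonal decomposition with $n_1 + \cdots + n_\ell = n$, and in an adapted orthonormal basis $A = \lambda_1 I_{n_1} \oplus \cdots \oplus \lambda_\ell I_{n_\ell}$. The commutation $AQ = QA$ then forces $Q$ to leave each eigenspace invariant: for $x \in E_j$ one has $A(Qx) = Q(Ax) = \lambda_j Qx$, so $Qx \in E_j$. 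Hence $Q = Q_1 \oplus \cdots \oplus Q_\ell$ is block diagonal along the same decomposition, each $Q_j := Q|_{E_j}$ being nilpotent.

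Finally I would determine the admissible range of $\ell$, which I expect to be the only substantive point. The nilpotency index of the block-diagonal operator $Q$ is the maximum of the indices of the blocks $Q_j$; since $Q$ is $k$-nilpotent, some block, say $Q_{j_0}$, has index exactly $k$. A $k$-nilpotent operator can only live on a space of dimension at least $k$, so $n_{j_0} \geq k$. Using $n_j \geq 1$ for the remaining $\ell - 1$ blocks then gives $n = \sum_j n_j \geq k + (\ell - 1)$, that is, $\ell \leq n-k+1$; combined with the trivial bound $\ell \geq 1$ this yields $\ell \in \{1, \ldots, n-k+1\}$, as asserted. The reduction and the diagonalization are routine, so the counting that bounds $\ell$ is the crux. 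I would also note the converse for completeness: any operator of the displayed form has a nilpotent part commuting with its unitary diagonal part, hence is isometric $k$-Jordan, and so is a strict $(2k-1)$-isometry by Theorem \ref{general}.
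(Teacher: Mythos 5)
Your proof is correct and follows essentially the same route as the paper: reduce to the isometric $k$-Jordan form via Theorem \ref{Jordan}, decompose $\mathbb{C}^n$ into orthogonal spectral subspaces on which $Q$ acts block-diagonally, and bound $\ell$ by the dimension count forced by a $k$-nilpotent block (the paper merely runs this count as a proof by contradiction, assuming at least $n-k+2$ distinct eigenvalues). One minor caveat: your closing converse remark overreaches, since the displayed form alone does not force $Q$ to commute with the diagonal unitary part, but the theorem only asserts the forward direction, so this does not affect the proof.
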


\begin{proof}
Suppose that $T$ is a strict $(2k-1)$-isometry. By Theorem \ref{Jordan}, we have that $T = U +Q$, where  $U$ is a unitary  operator and $Q$ is a $k$-nilpotent operator such that $UQ = QU$.\\
Assume, by contradiction, that $T$ has at least $n-k+2$ distinct eigenvalues. That means
                     $$\sigma(T) = \{\lambda_1, ..., \lambda_r\}, \;\ \mbox{with} \;\ r\geq n-k+2.$$
Then $\mathbb{C}^n = H_{\lambda_1}\oplus ...\oplus H_{\lambda_r}$ , where $H_{\lambda_i}:= Ker(T - \lambda_i I)^{n_i}$ and ${n_i}$ is the order of multiplicity of the eigenvalue $\lambda_i$. Denote $T_{|H_i}$ the restriction operator  of $T$ to $H_i$,  for $1\leq i\leq r$. Then
 $T_{|H_i}=\lambda_i I_{n_i} +Q_i$, where $Q_i$ is a $h_i$-nilpotent with $1 \leq h_i \leq n_i$. By part (2) of Lemma \ref{spectrum of m-isometry},  we conclude that $T$ could be written as
 $$
 T= (\lambda_1 I_{n_1} \oplus ...\oplus \lambda_rI_{n_r}) + (Q_1 \oplus ...\oplus Q_r)\; ,
 $$
where $Q_1 \oplus ...\oplus Q_r$ is a $k_0$-nilpotent, with $k_0:=\max_{i=1, ..., r} \{h_i\}$ and $k_0 < k$.  Then we get a contradiction.
\end{proof}

\begin{corollary}
If $T \in L(\mathbb{C}^n)$ is a strict $(2k-1)$-isometry, with $2 \leq k \leq n$, then $\sigma(T) \subseteq \{\lambda_1, ..., \lambda_{n-k+1}\} \subseteq \partial \mathbb{D}$.
\end{corollary}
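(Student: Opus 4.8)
The plan is to read the spectrum off directly from the structural description already provided by Theorem \ref{characteristic of m-isometry}. That theorem asserts that a strict $(2k-1)$-isometry $T$ on $\mathbb{C}^n$ can be written as $T = D + Q$, where $D = \lambda_1 I_{n_1} \oplus \cdots \oplus \lambda_\ell I_{n_\ell}$ with $\ell \in \{1, \ldots, n-k+1\}$, each $|\lambda_j| = 1$, $n_1 + \cdots + n_\ell = n$, and $Q$ is a $k$-nilpotent operator commuting with $D$. The first step is simply to invoke this decomposition.

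The second step is to compute $\sigma(T)$. Since $Q$ is nilpotent and commutes with $D$, adjoining $Q$ does not perturb the spectrum: $\sigma(T) = \sigma(D + Q) = \sigma(D)$, exactly as used in the proof of Theorem \ref{caso-n} via \cite[Proposition 1.1]{yhy}. Equivalently, in a basis adapted to the decomposition $T$ is upper triangular with diagonal entries $\lambda_1, \ldots, \lambda_\ell$ repeated with multiplicities $n_1, \ldots, n_\ell$, so its eigenvalues are precisely $\lambda_1, \ldots, \lambda_\ell$. Hence $\sigma(T) = \{\lambda_1, \ldots, \lambda_\ell\}$.

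The final step is to absorb this into the claimed set. Since $\ell \leq n-k+1$, we have $\{\lambda_1, \ldots, \lambda_\ell\} \subseteq \{\lambda_1, \ldots, \lambda_{n-k+1}\}$, and because each $|\lambda_j| = 1$ this set lies in $\partial \mathbb{D}$, giving $\sigma(T) \subseteq \{\lambda_1, \ldots, \lambda_{n-k+1}\} \subseteq \partial \mathbb{D}$, as required.

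Since essentially all the work has already been carried out in Theorem \ref{characteristic of m-isometry}, I do not anticipate any genuine obstacle; the only point deserving a word of justification is the spectral invariance $\sigma(D+Q) = \sigma(D)$ under a commuting nilpotent perturbation, which is already available from the references. The corollary is thus a direct bookkeeping consequence, repackaging the bound $\ell \leq n-k+1$ on the number of distinct eigenvalues as a bound on the spectrum.
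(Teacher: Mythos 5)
Your proposal is correct and follows exactly the route the paper intends: the corollary is stated without proof as an immediate consequence of Theorem \ref{characteristic of m-isometry}, and your argument simply reads the spectrum off that decomposition using the spectral invariance under a commuting nilpotent perturbation (the same fact, via \cite[Proposition 1.1]{yhy}, already used in Theorem \ref{caso-n}). No gaps; this is the same bookkeeping the authors had in mind.
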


\begin{corollary}
Any $(2n-1)$-isometry on $\mathbb{C}^{n}$ is of the form $\lambda I+Q$, where $Q$ is an $n$-nilpotent operator and $\lambda \in\partial \mathbb{D}$. In particular the spectrum is   a single point on the unit circle.
 \end{corollary}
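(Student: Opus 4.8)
The plan is to deduce this statement directly from Theorem \ref{characteristic of m-isometry} by pushing the parameter $k$ to its extremal value $k=n$. First I would note that, by part (2) of Theorem \ref{Jordan}, the largest strictness degree attainable by an $m$-isometry on $\mathbb{C}^n$ is $2n-1$; consequently the relevant case is that of a strict $(2n-1)$-isometry, which is exactly the hypothesis of Theorem \ref{characteristic of m-isometry} with $k=n$. (If $T$ were strict of a lower odd degree $2k-1$ with $k<n$, then the conclusion that $Q$ is genuinely $n$-nilpotent would fail, so the statement must be read for the extremal degree.)

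Next I would apply Theorem \ref{characteristic of m-isometry} with $k=n$. That theorem produces a decomposition $T=(\lambda_1 I_{n_1}\oplus\cdots\oplus\lambda_\ell I_{n_\ell})+Q$ with $Q$ a $k$-nilpotent operator and $\ell\in\{1,\ldots,n-k+1\}$. Setting $k=n$ collapses the index range to $\{1,\ldots,n-n+1\}=\{1\}$, so $\ell=1$ is forced, and the normalization $n_1+\cdots+n_\ell=n$ then gives $n_1=n$. Hence $T=\lambda I+Q$ with $\lambda:=\lambda_1\in\partial\mathbb{D}$ and $Q$ an $n$-nilpotent operator, which is precisely the claimed form.

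For the spectral assertion I would use that $Q$ is nilpotent, so $\sigma(Q)=\{0\}$; writing $T$ in upper-triangular (Jordan) form with all diagonal entries equal to $\lambda$, its eigenvalues are all $\lambda$, whence $\sigma(T)=\sigma(\lambda I+Q)=\{\lambda\}$, a single point of the unit circle. This also follows from part (1) of Lemma \ref{spectrum of m-isometry} together with the single-block structure just obtained.

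The only real obstacle is the combinatorial bookkeeping showing that $k=n$ forces $\ell=1$ (and hence a single spectral point); there is no analytic difficulty, since the corollary is a clean specialization of the structure theorem already proven. I would therefore keep the argument to a couple of lines, citing Theorem \ref{characteristic of m-isometry} and Theorem \ref{Jordan} directly.
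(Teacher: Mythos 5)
Your proposal is correct and matches the paper's (implicit) argument: the corollary is stated without proof precisely because it is the $k=n$ specialization of Theorem \ref{characteristic of m-isometry}, which forces $\ell=1$ and $n_1=n$, exactly as you compute. Your side remark that the statement must be read for strict $(2n-1)$-isometries (the maximal degree by Theorem \ref{Jordan}(2)) is a fair and correct reading of the paper's intent.
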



\section{Construction  of an $(m+1)$-isometry from  an $m$-isometry }

In this section we present a method to construct a Hilbert space $H_k$ and an $(m+1)$-isometry  on $H_k$ from an $m$-isometry $T^k$ on a Hilbert space for some integer $k$.  Our result is based  on the construction  given by Aleman and Suciu in \cite[Proposition 5.2]{AS}  for $m=2$ and $k=1$.

Henceforth $H$ will denote an infinite dimensional Hilbert space.

Given $S\in L(H)$, $x\in H$ and an integer $\ell \geq 1$, it is defined
$$
\beta_{\ell}(S,x):=\frac{1}{\ell !}\sum_{j=0}^\ell  (-1)^{\ell -j} {\ell  \choose j} \| S^j x\|^2 \;.
$$
Note that $S$ is an $m$-isometry  if and only if $\beta _m(S,x)=0$ for all vector $x\in H$.

Consider $\mathbb{C}[z]$ the space  of all complex polynomials. Given $p\in \mathbb{C}[z]$,  we write
$$
p(z)=\sum_{n\geq 0} p_nz^n
$$
and define $Lp\in \mathbb{C}[z]$ in the following  way:
$$
Lp(z):= \sum_{n\geq 1}p_n z^{n-1}=\frac{p(z)-p_0}{z}\;.
$$
We have that $\mathbb{C}[z]$ is an inner product  space with the norm $\|. \|_2$ given by
$$
\|p\|^2_2:=\sum_{n\geq0}|p_n|^2 \;.
$$
Also if we consider a new norm on $\mathbb{C}[z]$ defined by
$$
\||p\||^2_k:=\|p\|^2_2+\sum_{n\geq0}\|(L^{nk}p)(T)x_0\|^2\;,
$$
it is obtained that $\mathbb{C}[z]$ is an inner product space  with $\|| . \||_k$. Denote   $H_k$ its completion with the new norm.

The following combinatorial result will be useful.
\begin{lemma}{ \rm \cite[Eq. $0.151$ $(4)$]{GR}}\label{binom 2}
If $m$ is any positive integer, then
$$
\sum_{k=0}^{m}(-1)^{k} \binom{n}{k}=(-1)^m \binom{n-1}{m}\;,
$$
for any integer $n\geq m+1$.
\end{lemma}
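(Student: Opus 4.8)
The plan is to prove this identity by a telescoping argument built on Pascal's rule, which collapses the alternating sum of binomial coefficients with top index $n$ into a single binomial coefficient with top index $n-1$. The only ingredient I need is the recurrence $\binom{n}{k}=\binom{n-1}{k}+\binom{n-1}{k-1}$, together with the standard convention $\binom{n-1}{-1}=0$.

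First I would rewrite the general summand so that it becomes a first difference. Setting $a_k:=(-1)^k\binom{n-1}{k}$ for $k\geq 0$ and $a_{-1}:=0$, Pascal's rule gives
$$
(-1)^k\binom{n}{k}=(-1)^k\binom{n-1}{k}+(-1)^k\binom{n-1}{k-1}=a_k-a_{k-1},
$$
where the last equality uses $(-1)^k\binom{n-1}{k-1}=-(-1)^{k-1}\binom{n-1}{k-1}=-a_{k-1}$. No degenerate cases arise because $n\geq m+1$ forces $n-1\geq m$, so each $\binom{n-1}{k}$ with $0\leq k\leq m$ is a genuine binomial coefficient; in fact the telescoping below is insensitive to this hypothesis.

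Next I would sum this identity over $k$ from $0$ to $m$, at which point the right-hand side telescopes and only the top term survives:
$$
\sum_{k=0}^{m}(-1)^k\binom{n}{k}=\sum_{k=0}^{m}\bigl(a_k-a_{k-1}\bigr)=a_m-a_{-1}=a_m=(-1)^m\binom{n-1}{m},
$$
which is precisely the claimed equality. The boundary term $a_{-1}=0$ is exactly what makes the collapse clean, and this finishes the argument.

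I do not expect any genuine obstacle, since the statement is a standard combinatorial identity; the only point requiring a little care is the bookkeeping at the endpoint $k=0$, where one must declare $\binom{n-1}{-1}=0$ so that the telescoped sum reduces to $a_m$ alone. As an alternative I could argue by induction on $m$, with base case $m=1$ reading $\binom{n}{0}-\binom{n}{1}=1-n=-(n-1)$ and inductive step obtained by adding $(-1)^{m+1}\binom{n}{m+1}$ and applying Pascal's rule in the form $\binom{n-1}{m+1}=\binom{n}{m+1}-\binom{n-1}{m}$; the telescoping proof above is preferable as it needs no separate base case.
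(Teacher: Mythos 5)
Your telescoping argument is correct: Pascal's rule $\binom{n}{k}=\binom{n-1}{k}+\binom{n-1}{k-1}$ does turn each summand $(-1)^k\binom{n}{k}$ into the difference $a_k-a_{k-1}$ with $a_k=(-1)^k\binom{n-1}{k}$, the convention $\binom{n-1}{-1}=0$ handles the endpoint $k=0$ cleanly, and the sum collapses to $a_m=(-1)^m\binom{n-1}{m}$ exactly as you claim (one can sanity-check with $m=2$, $n=3$: $1-3+3=1=\binom{2}{2}$). The paper itself offers no proof of this lemma --- it is simply quoted from Gradshteyn--Ryzhik, Eq.~0.151(4) --- so there is nothing to compare against on the paper's side; your write-up supplies a short, self-contained derivation of the cited identity, and your remark that the hypothesis $n\geq m+1$ is not actually needed for the telescoping (with the usual conventions for binomial coefficients) is accurate. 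The only cosmetic point is that you could state explicitly that Pascal's rule at $k=0$ reads $\binom{n}{0}=\binom{n-1}{0}+\binom{n-1}{-1}$, which is where the convention is invoked, but this is already implicit in your bookkeeping.
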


Recall that the class of $m$-isometries is  stable under  powers.   However, the converse is not true. See \cite{bdm,j}.

\begin{theorem}
Let $T\in L(H)$ such that $T^k$ is a strict $m$-isometry on $R(T^k)$, for some $k$, and $x_0 \in H \setminus  \{0\}$ such that $\beta_{m-1} (T^{k},T^k x_0) \neq 0$.
\begin{enumerate}
\item For every $p\in \mathbb{C}[z]$ and $j\in \mathbb{N}$,
$$
\||M^{kj}_zp\||^2_k=\||p\||^2_k+\sum_{i=1}^j\|T^{ki}p(T)x_0\|^2\; ,
$$
where $M_z$ denotes the multiplication operator defined by $M_zp:=zp$.
\item For every $p\in \mathbb{C}[z]$ and $\ell\geq 1$,
\begin{equation}\label{beta}
\beta _{\ell+1}(M^k_z,p)= \frac{\ell !}{(\ell +1)!} \beta _{\ell } (T^k, T^kp(T)x_0)\;.
\end{equation}
\item The extension of $M_z^k$  to  $H_k$ is an $(m+1)$-isometry. 
\end{enumerate}
\end{theorem}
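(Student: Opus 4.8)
The plan is to establish the three parts in sequence, since (2) rests on (1) and the desired conclusion (3) is an immediate specialization of (2). Throughout I use that $M_z^{kj}p=z^{kj}p$ and that $L$ merely deletes the constant term and divides by $z$, so that $L(z^{r}q)=z^{r-1}q$ for every $r\ge 1$. For part (1), I would first observe that $M_z$ preserves the $\|\cdot\|_2$ norm (it only shifts coefficients), whence $\|z^{kj}p\|_2=\|p\|_2$. The heart is to track $L^{nk}(z^{kj}p)$: iterating the rule above gives $L^{nk}(z^{kj}p)=z^{k(j-n)}p$ for $0\le n\le j$, while once the factor $z^{kj}$ is exhausted one has $L^{nk}(z^{kj}p)=L^{k(n-j)}p$ for $n\ge j$. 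I then split the series $\sum_{n\ge 0}\|(L^{nk}(z^{kj}p))(T)x_0\|^2$ at the breakpoint $n=j$. The tail $n\ge j$ reproduces verbatim $\sum_{n\ge 0}\|(L^{nk}p)(T)x_0\|^2$, i.e. the second summand of $\||p\||_k^2$; the head $0\le n\le j-1$, after writing $(z^{k(j-n)}p)(T)x_0=T^{k(j-n)}p(T)x_0$ and substituting $i=j-n$, becomes $\sum_{i=1}^{j}\|T^{ki}p(T)x_0\|^2$. Adding the unchanged $\|p\|_2^2$ yields the identity in (1).

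For part (2), I substitute the formula from (1) into $\beta_{\ell+1}(M_z^k,p)=\frac{1}{(\ell+1)!}\sum_{j=0}^{\ell+1}(-1)^{\ell+1-j}\binom{\ell+1}{j}\||M_z^{kj}p\||_k^2$. The constant piece $\||p\||_k^2$ is annihilated since $\sum_{j=0}^{\ell+1}(-1)^{\ell+1-j}\binom{\ell+1}{j}=(1-1)^{\ell+1}=0$. Interchanging the order of summation in the surviving double sum, the coefficient of $\|T^{ki}p(T)x_0\|^2$ (for $1\le i\le \ell+1$) is $\sum_{j=i}^{\ell+1}(-1)^{\ell+1-j}\binom{\ell+1}{j}$, which after the substitution $r=\ell+1-j$ equals $\sum_{r=0}^{\ell+1-i}(-1)^{r}\binom{\ell+1}{r}$. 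By Lemma \ref{binom 2} this is $(-1)^{\ell+1-i}\binom{\ell}{\ell+1-i}=(-1)^{\ell+1-i}\binom{\ell}{i-1}$. Finally, writing $T^{ki}p(T)x_0=(T^k)^{i-1}(T^kp(T)x_0)$ and relabelling $s=i-1$ recasts the expression as $\frac{1}{(\ell+1)!}\sum_{s=0}^{\ell}(-1)^{\ell-s}\binom{\ell}{s}\|(T^k)^{s}(T^kp(T)x_0)\|^2$, which is precisely $\frac{\ell!}{(\ell+1)!}\beta_{\ell}(T^k,T^kp(T)x_0)$.

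For part (3), I first check that $M_z^k$ extends to a bounded operator on $H_k$: taking $j=1$ in part (1) gives $\||M_z^kp\||_k^2=\||p\||_k^2+\|T^kp(T)x_0\|^2$, and since $\||p\||_k^2\ge\|p(T)x_0\|^2$ and $T$ is bounded, $\|T^kp(T)x_0\|^2\le\|T\|^{2k}\||p\||_k^2$, so $\||M_z^kp\||_k^2\le(1+\|T\|^{2k})\||p\||_k^2$. Now I apply part (2) with $\ell=m$: for every $p\in\mathbb{C}[z]$, $\beta_{m+1}(M_z^k,p)=\frac{m!}{(m+1)!}\beta_{m}(T^k,T^kp(T)x_0)$. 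As $T^kp(T)x_0\in R(T^k)$ and $T^k$ is an $m$-isometry on $R(T^k)$, the right-hand factor vanishes, so $\beta_{m+1}(M_z^k,p)=0$ for all polynomials $p$. Since $\mathbb{C}[z]$ is dense in $H_k$ and $x\mapsto\beta_{m+1}(M_z^k,x)$ is continuous (a fixed finite combination of squared norms of the bounded operators $M_z^{kj}=(M_z^k)^{j}$), the identity extends to all of $H_k$, proving that the extension of $M_z^k$ is an $(m+1)$-isometry. The hypothesis $\beta_{m-1}(T^k,T^kx_0)\neq 0$ enters only to guarantee strictness: taking $p=1$ and $\ell=m-1$ in part (2) gives $\beta_m(M_z^k,1)=\frac{(m-1)!}{m!}\beta_{m-1}(T^k,T^kx_0)\ne 0$, so $M_z^k$ is not an $m$-isometry.

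The main obstacle I anticipate is the careful bookkeeping in part (1), especially handling the transition at $n=j$ where $L$ stops acting on $z^{kj}$ and begins acting on $p$, and ensuring that the tail and head of the split reassemble exactly into $\||p\||_k^2$ plus the extra finite sum. The combinatorial reindexing in part (2)—in particular matching the hypotheses of Lemma \ref{binom 2} and the two index shifts $r=\ell+1-j$ and $s=i-1$—is the other delicate point, though it is purely mechanical once set up correctly.
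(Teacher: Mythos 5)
Your proof is correct and takes essentially the same route as the paper's: part (2) uses the identical interchange of summation and the same appeal to Lemma \ref{binom 2}, and part (3) is the same specialization $\ell=m$. The only differences are cosmetic --- you prove part (1) by splitting the series directly at the breakpoint $n=j$ where the paper argues by induction on $j$, and in part (3) you supply the boundedness and density details that the paper leaves implicit.
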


\begin{proof}
(1)  Let $p$ be  any polynomial and $j\in \NN$. Then will prove that
 \begin{equation}\label{ll}
  \||M^{kj}_zp\||^2_k=\||p\||^2_k+\sum_{i=1}^j\|T^{ki}p(T)x_0\|^2\;,
 \end{equation}
by induction. For $j=1$ we need to prove that
\begin{equation}\label{ecmz}
\||M^k_zp\||^2_k=\||p\||^2_k+\|T^kp(T)x_0\|^2\;,
\end{equation}
for any polynomial $p$.\\
Let $p(z):=\sum_{n\geq 0}p_n z^n$. Then
\begin{eqnarray*}
\|| M^k_z p\||^2_k = \|| z^kp\||^2_k & = & \| z^kp\|^2 _2 + \sum_{n\geq0}\|(L^{nk}z^kp)(T)x_0\|^2 \\[0.7pc]
                                 & = &
                                 \|p\|^2 _2 + \| (z^kp)(T)x_0\|^2 + \sum_{n\geq1}\|(L^{nk}z^kp)(T)x_0\|^2 \\[0.7pc]
                                 & = & \|p\|^2 _2 + \| T^kp(T)x_0\|^2 + \sum_{n\geq0}\|(L^{nk}p)(T)x_0\|^2 
                                 =\||p\||^2 _k + \| T^kp(T)x_0\|^2\;.
\end{eqnarray*}
Then (\ref{ecmz}) holds.

Suppose that (\ref{ll}) is true for $j$.
 Let us prove it for $j+1$. Then
 \begin{eqnarray*}
\|| M^{k(j+1)}_z p\||^2_k & =  & \|| M^{kj}_z(M^k_z p)\||^2_k =  \|| M^k_z p\||^2_k + \sum_{i=1}^j\|T^{ki}(M^k_zp)(T)x_0\|^2 \\[0.7pc]
                        & = &
                        \|| z^kp\||^2_k + \sum_{i=1}^j\|T^{ki}T^kp(T)x_0\|^2 \\[0.7pc]
                        & = & \| p\|^2_2  + \sum_{n\geq0}\|(L^{nk}z^kp)(T)x_0\|^2 + \sum_{i=1}^j\|T^{k(i+1)}p(T)x_0\|^2 \\[0.7pc]
                        & = & \| p\|^2_2  + \| T^kp(T)x_0\|^2 + \sum_{n\geq0}\|(L^{nk}p)(T)x_0\|^2 + \sum_{i=2}^{j+1}\|T^{ki}p(T)x_0\|^2 \\[0.7pc]
                        & = & \|| p\||^2_k  + \sum_{i=1}^{j+1}\|T^{ki}p(T)x_0\|^2\;.
\end{eqnarray*}
So we prove (\ref{ll}).

(2) For $\ell \in \mathbb{N}$, we have
 \begin{eqnarray*}
\beta_{\ell+1}(M^k_z,p) &=& \frac{1}{(\ell+1)!}\sum_{j=0}^{\ell+1}(-1)^{\ell+1-j} \binom{\ell+1}{j}\|| M^{kj}_zp\||^2_k\\
                        &=&
                       \frac{1}{(\ell+1)!}\left((-1)^{\ell+1}\||p\||^2_k+\sum_{j=1}^{\ell+1}(-1)^{\ell+1-j} \binom{\ell+1}{j}\||M^{kj}_zp\||^2_k\right)\\
                        &=&\frac{1}{(\ell+1)!}\left((-1)^{\ell+1}\||p\||^2_k+\sum_{j=1}^{\ell+1}(-1)^{\ell+1-j} \binom{\ell+1}{j}\left(\||p\||^2_k+\sum_{i=1}^{j}\|T^{ki}p(T)x_0\|^2 \right) \right)\\
                        &=& \frac{1}{(\ell+1)!} \sum_{j=1}^{\ell+1}(-1)^{\ell+1-j} \binom{\ell+1}{j}\sum_{i=1}^{j}\|T^{ki}p(T)x_0\|^2\\
                        &=&\frac{1}{(\ell+1)!} \sum_{j=1}^{\ell+1} \|T^{kj}p(T)x_0\|^2 \sum_{i=j}^{\ell +1} (-1)^{\ell+1-i}\binom{\ell+1}{i}\;,
\end{eqnarray*}
where $p$ is any  polynomial.\\
Using Lemma \ref{binom 2}, in the last sum, we have that
$$
\sum_{i=j}^{\ell+1} (-1)^{\ell+1-i} {\ell +1 \choose i}= -\sum_{i=0}^{j-1} (-1)^{\ell +1-j} {l+1 \choose j}=
(-1)^{\ell+j-1} {\ell \choose j-1}\;.
$$
So,
\begin{eqnarray*}
\beta_{\ell+1}(M^k_z,p) &=& \frac{1}{(\ell+1)!} \sum_{j=1}^{\ell+1} \|T^{kj}p(T)x_0\|^2 (-1)^{\ell+j-1}{\ell \choose j-1}\\
                        &=& \frac{1}{(\ell+1)!} \sum_{j=0}^{\ell} (-1)^{\ell-j}{\ell \choose j} \|T^{kj}p(T)T^kx_0\|^2\\
                        &=& \frac{\ell!}{(\ell+1)!}\beta_\ell (T^{k},T^kp(T)x_0)\;.
\end{eqnarray*}
So, (\ref{beta}) is proved.

(3) It is enough to prove that
$\beta _{m+1}(M_z^k, p)=0$ for any $p\in \mathbb{C}[z]$. This is a consequence of (\ref{beta}), since $T^k$ is an $m$-isometry on $R(T^k)$.
\end{proof}

\begin{corollary}\cite[Proposition 5.2]{AS}
Let $T$ be a $2$-isometry on a Hilbert space $H$. Fix $x_0\in H\setminus \{ 0\}$ and let $H_1$ be the completion of the space of analytic polynomials with respect to the norm
$$
\| p\|_1^2:= \|p\|_2^2 + \sum_{n\geq 0} \| (L^np)(T)x_0\|^2 \;.
$$
Then the multiplication operator by the independent variable $M_zp:= zp$ extends to a $3$-isometry  on $H_1$.
\end{corollary}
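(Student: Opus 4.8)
The plan is to recognize this corollary as the special case $k=1$, $m=2$ of the preceding theorem, and to note along the way why its weaker hypotheses suffice. First I would observe that for $k=1$ the norm in the statement is exactly the norm $\||\cdot\||_1$ of the theorem, since $\||p\||_1^2=\|p\|_2^2+\sum_{n\geq0}\|(L^{n}p)(T)x_0\|^2$; hence the space $H_1$ here is the same completion as there. Because $T\in L(H)$ is bounded and $\|p(T)x_0\|\leq\|p\|_1$ (the $n=0$ summand dominates), equation (\ref{ecmz}) gives
$$
\|M_z p\|_1^2=\|p\|_1^2+\|Tp(T)x_0\|^2\leq\bigl(1+\|T\|^2\bigr)\|p\|_1^2,
$$
so $M_z$ is bounded on the dense subspace of polynomials and therefore extends to a bounded operator on $H_1$.

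Next I would invoke identity (\ref{beta}). Its derivation, carried out in parts (1) and (2) of the theorem, is a purely combinatorial computation that uses neither strictness nor the restriction to $R(T^k)$, so it is available for an arbitrary $2$-isometry $T$ on all of $H$. Specialized to $k=1$ and $\ell=2$ it reads
$$
\beta_3(M_z,p)=\frac{2!}{3!}\,\beta_2\bigl(T,\,Tp(T)x_0\bigr)=\tfrac13\,\beta_2\bigl(T,\,Tp(T)x_0\bigr),
$$
for every polynomial $p$. Since $T$ is a $2$-isometry, $\beta_2(T,y)=0$ for all $y\in H$; taking $y=Tp(T)x_0$ yields $\beta_3(M_z,p)=0$ for every $p\in\mathbb{C}[z]$.

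Finally I would promote this to a statement on all of $H_1$ by density. The functional $x\mapsto\beta_3(M_z,x)=\frac{1}{3!}\sum_{j=0}^{3}(-1)^{3-j}\binom{3}{j}\|M_z^{j}x\|_1^2$ is continuous on $H_1$, because $M_z$ is bounded and each map $x\mapsto\|M_z^{j}x\|_1^2$ is continuous; as it vanishes on the polynomials, which are dense in $H_1$ by construction, it vanishes identically. Hence the extension of $M_z$ is a $3$-isometry on $H_1$, as claimed.

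The single point demanding care, which I would flag as the main obstacle, is precisely the transition from polynomials to all of $H_1$: it hinges on $M_z$ genuinely extending to a bounded operator, and that is exactly what (\ref{ecmz}) together with the boundedness of $T$ secures. Note also that I deliberately cite only parts (1) and (2) of the theorem rather than part (3): part (3) would require the strictness and nonvanishing hypotheses, whereas here the conclusion sought is merely that $M_z$ is a $3$-isometry (not a \emph{strict} one), which follows from (\ref{beta}) and the assumption that $T$ is a $2$-isometry on $H$.
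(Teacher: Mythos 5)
Your proof is correct and follows essentially the same route as the paper, which presents this corollary as the immediate specialization $k=1$, $m=2$ of the preceding theorem and proves part (3) there exactly by combining identity (\ref{beta}) with the vanishing of $\beta_2(T,\cdot)$. Your additional remarks --- that only parts (1) and (2) are needed (so the strictness and $\beta_{m-1}$ hypotheses are irrelevant here), and that the extension to $H_1$ rests on the boundedness of $M_z$ supplied by (\ref{ecmz}) plus a density argument --- are accurate refinements of details the paper leaves implicit.
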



{\bf Acknowledgements:}
The first  author is partially supported by grant of Ministerio  de Ciencia e Innovaci\'{o}n, Spain, project no. MTM2016-75963-P. The third author was supported in part by Departamento de Análisis Matemático of  Universidad de La Laguna and Le Laboratoire de Recherche Mathématiques et Applications LR17ES11.


\end{document}